\newcommand{\ie}{\emph{i.e.}}
\newcommand{\Real}{\mathbb{R}}
\newcommand{\Nat}{\mathbb{N}}
\newcommand{\Dom}{\mathsf{D}}
\newcommand{\curl}{\mathop{\mathrm{curl}}\nolimits}
\newcommand{\eps}{\varepsilon}
\newcommand{\sii}{L^2}
\newcommand{\der}{\mathrm{d}}
\newcommand{\demi}{\mbox{$\frac{1}{2}$}}
\newtheorem{Theorem}{Theorem}
\newtheorem{Lemma}{Lemma}
\newtheorem{Proposition}{Proposition}
\newtheorem{Conjecture}{Conjecture}
\theoremstyle{definition}
\newtheorem{Remark}{Remark}
\def\OMIT#1{}
\definecolor{DarkGreen}{rgb}{0,0.5,0.1} % David
\newcommand\soutD{\bgroup\markoverwith
{\textcolor{DarkGreen}{\rule[.5ex]{2pt}{1pt}}}\ULon}
\newcommand\soutP{\bgroup\markoverwith
{\textcolor{blue}{\rule[.5ex]{2pt}{1pt}}}\ULon}
\newcommand{\Hm}[1]{\leavevmode{\marginpar{\tiny%
$\hbox to 0mm{\hspace*{-0.5mm}$\leftarrow$\hss}%
\vcenter{\vrule depth 0.1mm height 0.1mm width \the\marginparwidth}%
\hbox to
0mm{\hss$\rightarrow$\hspace*{-0.5mm}}$\\\relax\raggedright #1}}}
\begin{document}
%
%-------%
% TITLE %
%-------%
%------------------------------------------%
%------------------------------------------%
\title{\textbf{\LARGE Is the optimal magnetic rectangle a square?}}
\author{David Krej\v{c}i\v{r}{\'\i}k}
\date{\small 
\emph{
\begin{quote}
\begin{center}
Department of Mathematics, Faculty of Nuclear Sciences and 
Physical Engineering, Czech Technical University in Prague, 
Trojanova 13, 12000 Prague 2, Czech Republic;
david.krejcirik@fjfi.cvut.cz
\end{center}
\end{quote}
}
\smallskip
19 August 2025}
\maketitle
%------------------------------------------%
%------------------------------------------%
 
%
\begin{abstract}
\noindent
We are concerned with the dependence of the lowest eigenvalue of the magnetic Dirichlet Laplacian on the geometry of rectangles,
subject to homogeneous fields. 
We conjecture that the square is a global minimiser 
both under the area or perimeter constraints.
Contrary to the well-known magnetic-free analogue,
the present spectral problem does not admit explicit solutions.
By establishing lower and upper bound to the eigenvalue,
we establish the conjecture for weak magnetic fields.
Moreover, we relate the validity of the conjecture
to the simplicity of the eigenvalue
and symmetries of minimisers of
a non-convex minimisation problem.
%
%\bigskip
%\begin{itemize}
%\item[\textbf{Keywords:}]
%
%\item[\textbf{MSC (2010):}]
%Primary: ; 
%Secondary: 
%\end{itemize}
%
\end{abstract}

\section{Introduction}
This paper is about how little we know
and how much the current mathematical tools are limited. 

Among all membranes of a given area, the circular one
produces the lowest fundamental tone. 
This is a well-known interpretation of the celebrated 
Faber--Krahn inequality stating that 
\begin{equation}\label{FK}
  \lambda_1(\Omega) \geq \lambda_1(\Omega^*)
  \,,
\end{equation}
where~$\Omega$ is any bounded open planar set, 
$\Omega^*$ is the disk of the same area
and $\lambda_1(\Omega)$ is the lowest eigenvalue of 
the boundary value problem 
\begin{equation}\label{Dirichlet} 
\left\{
\begin{aligned}
  -\Delta u &= \lambda u
  && \mbox{in} && \Omega 
  \,,
  \\
  u &= 0 
  && \mbox{on} && \partial\Omega 
  \,.
\end{aligned}
\right.
\end{equation}

Restricting to rectangles,
it is also true that the square is the optimal geometry
for the Dirichlet Laplacian.
More specifically, defining 
\begin{equation}\label{rectangle} 
  \Omega_{a,b} := 
  \left(-\mbox{$\frac{a}{2}$},\mbox{$\frac{a}{2}$}\right) 
  \times \left(-\mbox{$\frac{b}{2}$},\mbox{$\frac{b}{2}$}\right) 
  ,
\end{equation}
where $a,b$ are any positive numbers,
the inequality~\eqref{FK} remains true 
for any rectangle~$\Omega_{a,b}$ 
instead of the arbitrary domain~$\Omega$
and a square $\Omega_{a,b}^*:=\Omega_{a,a}$ 
instead of the disk~$\Omega^*$
whenever the area $|\Omega_{a,b}|=ab$ is fixed.
While the general proof based on symmetrisation techniques 
applies to arbitrary quadrilaterals \cite[Sec.~3.3]{Henrot},
the case of rectangles can alternatively be established
in an elementary way just by using the well-known fact that 
the problem~\eqref{Dirichlet} is explicitly solvable
by separation of variables in terms of sine and cosine functions.  
We refer to~\cite{Laugesen_2019} 
for a recent spectral optimisation of the Laplacian eigenvalues  
in the larger generality of rectangular boxes
with Robin boundary conditions.

Interpreting the Laplacian as the free Schr\"odinger operator
and the Dirichlet condition as hard-wall boundaries,
the result~\eqref{FK} equally  says that the ground-state energy
of a non-relativistic quantum particle constrained 
to nanostructures of a given material 
is minimised by the disk. 
It has been conjectured recently that~\eqref{FK} also holds
in the relativistic setting of the Dirac operator 
with infinite-mass boundary conditions 
instead of the Dirichlet Laplacian,
see~\cite{Antunes-Benguria-Lotoreichik-Ourmires-Bonafos_2021}
(respectively, \cite{BK4}) for arbitrary domains 
(respectively, rectangles).
Despite a strong numerical evidence 
\cite{Antunes-Benguria-Lotoreichik-Ourmires-Bonafos_2021}
(respectively, \cite{ABK})
and partial analytical results
\cite{Benguria-Fournais-Stockmeyer-Bosch_2017,
Lotoreichik-Ourmieres_2019,
Antunes-Benguria-Lotoreichik-Ourmires-Bonafos_2021}
(respectively, \cite{BK4,ABK}),
however, the proof of the relativistic conjectures
seems to be beyond the reach of current mathematical apparatus.
Indeed, no symmetrisation techniques are available for the Dirac operator. 
What is more, the case of rectangles is just illusively simpler,
for the Dirac problem cannot be solved by separation of variables.

The objective of this paper is to point out yet another 
spectral optimisation problem which remains a mystery
in the case of rectangles.
Physically, it is still about the non-relativistic quantum particle
constrained to~$\Omega$, 
but now subject to homogeneous magnetic fields.
The ground-state energy of this system coincides with
the lowest eigenvalue $\lambda_1^B(\Omega)$ of 
the boundary value problem
\begin{equation}\label{Dirichlet.magnet} 
\left\{
\begin{aligned}
  (-i\nabla-A)^2 u &= \lambda u
  && \mbox{in} && \Omega 
  \,,
  \\
  u &= 0 
  && \mbox{on} && \partial\Omega 
  \,,
\end{aligned}
\right.
\end{equation}
where $A:\overline{\Omega}\to\Real^2$ is any smooth vector potential
generating a constant magnetic field $\curl A =: B \in \Real$.
By the Erd\H{o}s--Faber--Krahn inequality~\cite{Erdos_1996}
(see also~\cite{Ghanta-Junge-Morin_2024}),
one still has the magnetic variant of~\eqref{FK}:
\begin{equation}\label{EFK}
  \lambda_1^B(\Omega) \geq \lambda_1^B(\Omega^*)
  \,,
\end{equation}
valid for every open set $\Omega \subset \Real^2$
of fixed area $|\Omega| = |\Omega^*|$
and any $B \in \Real$.
Analogous (reversed) inequalities have been recently studied
for the principal eigenvalue of the magnetic Neumann Laplacian
\cite{Kachmar-Lotoreichik_2024,Colbois-Lena-Provenzano-Savo_2024}.
In the case of rectangles, 
the following conjecture remains open.

\begin{Conjecture}\label{Conj.main}
For every $a>0$ and $B \in \Real$,
\begin{equation}\label{iso}
  \lambda_1^B(\Omega_{a,a^{-1}}) 
  \geq \lambda_1^B(\Omega_{1,1})
  \,.
\end{equation}
\end{Conjecture}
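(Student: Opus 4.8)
The plan is to pass to a fixed domain and turn the geometric optimisation into a spectral minimisation over trial functions. Fixing the area to one (so $b=a^{-1}$) and choosing the symmetric gauge $A=\frac B2(-y,x)$, I would rescale $\Omega_{a,a^{-1}}$ onto the unit square $\Omega_{1,1}$ by $x=a\xi$, $y=a^{-1}\eta$. Since the substitution is area preserving, a direct computation turns the magnetic quadratic form into
$$
  F(a,v) := a^{-2}\,T_\xi[v]+a^2\,T_\eta[v],
  \quad
  T_\xi[v]:=\int_{\Omega_{1,1}}\bigl|(-i\partial_\xi+\tfrac B2\eta)v\bigr|^2,
  \quad
  T_\eta[v]:=\int_{\Omega_{1,1}}\bigl|(-i\partial_\eta-\tfrac B2\xi)v\bigr|^2,
$$
so that $\lambda_1^B(\Omega_{a,a^{-1}})=\min_v F(a,v)$ over normalised $v\in H_0^1(\Omega_{1,1})$. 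Interchanging the two minimisations and optimising over~$a$ first by the arithmetic--geometric mean inequality yields
$$
  \min_{a>0}\lambda_1^B(\Omega_{a,a^{-1}})
  =\min_v\,2\sqrt{T_\xi[v]\,T_\eta[v]}
  =:\min_v G(v),
$$
whereas the square corresponds to $\lambda_1^B(\Omega_{1,1})=\min_v\bigl(T_\xi[v]+T_\eta[v]\bigr)$. The conjecture~\eqref{iso} is thus equivalent to the assertion that the global minimum of the \emph{non-convex} functional~$G$ is attained at a \emph{balanced} function, i.e.\ at some~$v_*$ with $T_\xi[v_*]=T_\eta[v_*]$: indeed, balance gives $G(v_*)=T_\xi[v_*]+T_\eta[v_*]\geq\lambda_1^B(\Omega_{1,1})\geq\min_v G=G(v_*)$, forcing equality throughout.

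Next I would exploit the symmetry of the square. The diagonal reflection $R:v(\xi,\eta)\mapsto v(\eta,\xi)$ reverses the orientation, hence the sign of the flux, which is restored by complex conjugation~$C$; the antiunitary map $\Phi:=C R$ preserves $H_0^1(\Omega_{1,1})$ and the normalisation, and a short calculation shows that it \emph{interchanges} the two energies, $T_\xi[\Phi v]=T_\eta[v]$ and $T_\eta[\Phi v]=T_\xi[v]$. Consequently $G$ and the product $T_\xi T_\eta$ are $\Phi$-invariant. If the minimiser of~$G$ is unique up to a phase, then $\Phi v_*=e^{i\theta}v_*$, and the phase-invariance of the quadratic forms yields $T_\xi[v_*]=T_\xi[\Phi v_*]=T_\eta[v_*]$, i.e.\ the required balance. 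This is exactly where the simplicity of the eigenvalue enters: uniqueness of the optimiser (equivalently, absence of symmetry breaking) is what guarantees that the minimiser inherits the diagonal symmetry of the problem rather than occurring in a $\Phi$-conjugate pair.

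For the weak-field regime I would make this rigorous by perturbation. At $B=0$ one has $T_\xi[v]\geq\pi^2$ and $T_\eta[v]\geq\pi^2$ for every normalised~$v$, so the unique minimiser of $G$ is the separated ground state $\cos(\pi\xi)\cos(\pi\eta)$, which is manifestly balanced and belongs to a simple eigenvalue with a spectral gap to the next level; the free value $\pi^2(a^{-2}+a^2)$ already has a strict, non-degenerate minimum at $a=1$. Quantifying the gap by the lower and upper bounds on $\lambda_1^B$ and invoking analytic perturbation theory, one controls the $O(B^2)$ correction and propagates both simplicity and balance to small~$|B|$, establishing~\eqref{iso} for weak fields.

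The main obstacle is the uniqueness of the minimiser of~$G$ for arbitrary~$B$. Because $G$ is non-convex, one cannot exclude \emph{a priori} that for strong fields the optimiser bifurcates into a $\Phi$-conjugate pair of \emph{unbalanced} functions, in which case $T_\xi[v_*]\neq T_\eta[v_*]$ and the square would cease to be optimal; in the strong-field limit the competition is moreover subleading, since $\lambda_1^B(\Omega)/B\to1$ universally and the geometric dependence is hidden in lower-order terms. Ruling out such symmetry breaking — equivalently, proving that $\lambda_1^B(\Omega_{1,1})$ remains simple and that its ground state stays diagonally symmetric for all~$B$ — is the crux on which the full conjecture rests.
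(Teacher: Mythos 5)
First, note that the statement you are proving is stated in the paper as a \emph{conjecture}: the paper itself establishes it only conditionally (Theorem~\ref{Thm.idea}) and for weak fields (Theorem~\ref{Thm.small}), and your proposal honestly arrives at the same place. Your reduction is essentially the paper's Sections~\ref{Sec.pre}--\ref{Sec.symmetry}: the area-preserving rescaling to the unit square, the arithmetic--geometric mean step producing the $a$-independent non-convex functional $G=2\sqrt{T_\xi T_\eta}$ (this is~\eqref{area}), the observation that~\eqref{iso} follows from the existence of a balanced minimiser (Conjecture~\ref{Conj.symmetry} and Theorem~\ref{Thm.idea}), and the remark that uniqueness of the minimiser plus the square's symmetry would force balance. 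Your antiunitary symmetry $CR$ is a legitimate variant of the paper's rotation $x\mapsto u(-x_2,x_1)$, and your claim that~\eqref{iso} is actually \emph{equivalent} to the existence of a balanced minimiser is correct and slightly sharper than what the paper states. You also correctly identify possible symmetry breaking of the non-convex minimisation as the obstruction to the full conjecture.

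The genuine gap is in your weak-field argument, which is the only unconditional claim. You propose to ``propagate both simplicity and balance to small $|B|$'' by ``analytic perturbation theory''. Simplicity of the eigenvalue propagates by Kato's theory, but \emph{balance of the minimiser of $G$} does not: $G$ is not a quadratic form, its Euler equation~\eqref{Euler} is nonlinear (the exponent $\alpha$ depends on the minimiser), and convergence of minimisers of $G_B$ to the phase orbit of $\cos(\pi\xi)\cos(\pi\eta)$ as $B\to0$ does not rule out a $\Phi$-conjugate pair of unbalanced minimisers for small $B\neq0$; one would need a nondegeneracy of the constrained Hessian of $G_0$ modulo phase and an implicit-function argument, none of which you supply. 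This is precisely the point the paper says it \emph{cannot} establish, even for small $|B|$, and why its proof of Theorem~\ref{Thm.small} bypasses $G$ entirely. Your fallback --- the diamagnetic lower bound $\pi^2(a^{-2}+a^2)$ against the $O(B^2)$ upper bound for the square --- only covers $|a-1|\gtrsim|B|$ (Propositions~\ref{Prop.upper} and~\ref{Prop.lower}); in the remaining window $|a-1|\lesssim|B|$ the ``non-degenerate minimum of the free value'' is not enough, since the $O(B^2)$ correction to $\lambda_1^B(\Omega_a)$ could a priori deform the minimum away from $a=1$ at exactly that scale. Closing this window requires the genuinely magnetic computation of $\partial_a^2\lambda_1^B(\Omega_a)|_{a=1}$, including the first-order corrector $\dot u_1$ of~\eqref{weak.dot} (Lemma~\ref{Lem.derivatives} and Theorem~\ref{Thm.local}), which your proposal omits.
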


Here we consider the class of rectangles of area equal to~$1$,
but there is no loss of generality in this restriction,
for other values can be recovered by scaling.

\begin{Remark}
Following~\cite{BK4}, it is also possible to state an analogous 
conjecture with a fixed perimeter instead of the fixed area. 
However, the isoperimetric constraint is known to be easier.
Indeed, it is implied by Conjecture~\ref{Conj.main}
via scaling and the classical (geometric) isoperimetric inequality.
\end{Remark}

We emphasise that the general validity of Conjecture~\ref{Conj.main}
if far from being obvious.
Indeed, one gets easily convinced that 
the problem~\eqref{Dirichlet.magnet} 
with~$\Omega_{a,b}$ instead of~$\Omega$
cannot be solved by separation of variables.
At the same time, no suitable symmetrisation 
techniques are available for complex-valued functions.
In this paper, we show that Conjecture~\ref{Conj.main} 
holds in the regime of weak magnetic fields.

\begin{Theorem}\label{Thm.small}
There exists a positive constant~$C$ (independent of~$a$)
such that~\eqref{iso} holds for every $|B| \leq C$.
\end{Theorem}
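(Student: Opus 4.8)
The plan is to combine a crude lower bound valid for all rectangles with a refined perturbative analysis near the square, exploiting the built-in symmetry of the problem. First I would record the reflection symmetry $\lambda_1^B(\Omega_{a,a^{-1}}) = \lambda_1^B(\Omega_{a^{-1},a})$, which follows from the invariance of $\curl A = B$ under the rotation by $\pi/2$ mapping $\Omega_{a,a^{-1}}$ onto $\Omega_{a^{-1},a}$. Writing $a = e^s$ and $F(s,B) := \lambda_1^B(\Omega_{e^s,e^{-s}})$, this says that $F$ is even in $s$, so $\partial_s F(0,B)=0$ for every $B$: the square $s=0$ is automatically a critical point. It therefore suffices to prove $F(s,B) \ge F(0,B)$ for $s \ge 0$ and $|B| \le C$, and I would split the range of $s$ into three regimes.

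For the large aspect ratios I would use the diamagnetic inequality $\lambda_1^B(\Omega) \ge \lambda_1^0(\Omega)$ together with the explicit value $\lambda_1^0(\Omega_{e^s,e^{-s}}) = \pi^2(e^{2s}+e^{-2s}) = 2\pi^2\cosh(2s)$, whence $F(s,B) \ge 2\pi^2\cosh(2s)$. Testing the magnetic quadratic form on the square with the real Dirichlet ground state $u_0$, in the symmetric gauge $A=\tfrac{B}{2}(-y,x)$ where the cross term vanishes because $u_0$ is real, yields the upper bound $F(0,B) \le 2\pi^2 + \tfrac{B^2}{4}\int_{\Omega_{1,1}}(x^2+y^2)\,u_0^2 =: 2\pi^2 + C_1 B^2$. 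Hence for $s \ge s_0$ with any fixed $s_0>0$ one has $F(s,B) \ge 2\pi^2\cosh(2s_0)$, and the desired inequality follows as soon as $C_1 B^2 \le 2\pi^2(\cosh(2s_0)-1)$, i.e.\ for $|B|$ small.

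It remains to treat the compact window $0 \le s \le s_0$, and here I would invoke analytic perturbation theory. Pulling the problem back to the fixed unit square by the area-preserving substitution $(x,y)\mapsto(e^{-s}x, e^{s}y)$ turns the family into an analytic family, in the sense of Kato, of self-adjoint operators depending analytically on $(s,B)$. Since the Dirichlet ground state of any rectangle is simple, $\lambda_1^0(\Omega_{e^s,e^{-s}})$ has a spectral gap bounded below uniformly on $[0,s_0]$, which persists for $|B|\le C_0$; consequently $F$ is real-analytic on $[0,s_0]\times[-C_0,C_0]$ with uniformly bounded derivatives, and its first-order term in $B$ vanishes because $\langle u_0, (2iA\cdot\nabla + i\divergence A)\,u_0\rangle = i\oint_{\partial\Omega_{1,1}}(A\cdot n)\,u_0^2 = 0$. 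I would then combine two facts. Away from the square, $F(s,0)-F(0,0) = 2\pi^2(\cosh(2s)-1)$ is bounded below by a positive constant on $[\eta,s_0]$, so the uniform estimate $|F(s,B)-F(s,0)|\le C_2 B^2$ closes the inequality there for small $B$. On $[0,\eta]$ I would instead establish local convexity $\partial_s^2 F(s,B) \ge 4\pi^2 > 0$, which together with $\partial_s F(0,B)=0$ and Taylor's formula with integral remainder gives $F(s,B)\ge F(0,B)$ directly. The computation behind convexity is that at $B=0$ one has $\partial_s^2 F(0,0) = 2\pi^2\,\partial_s^2\cosh(2s)\big|_{s=0} = 8\pi^2>0$, so by continuity of the second derivative in $B$ the positivity survives for small $|B|$. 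Taking $C$ to be the smallest of the thresholds produced in the three regimes completes the argument.

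The delicate point, and the step I expect to be the main obstacle, is precisely this uniform control near the square: the unperturbed shape gap $F(s,0)-F(0,0)$ vanishes quadratically as $s\to 0$, so it cannot simply be played off against the magnetic perturbation by a crude estimate, and one must instead show that the second shape-derivative remains strictly positive after the field is switched on. This is what forces the passage through analytic perturbation theory and the simplicity of the eigenvalue, and it is also where the uniformity of $C$ in $a$ has to be monitored carefully, since the error terms and the persistence of the spectral gap must all be controlled on a single compact parameter window common to the three regimes.
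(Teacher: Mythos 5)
Your proposal is correct and follows essentially the same route as the paper: the diamagnetic lower bound $\lambda_1^B(\Omega_a)\ge\pi^2(a^2+a^{-2})$ played against a real-trial-function upper bound $\lambda_1^B(\Omega_1)\le 2\pi^2+\tfrac{c}{2}B^2$ handles rectangles away from the square, while analytic perturbation theory (simplicity of the eigenvalue, vanishing first shape-derivative by the square's symmetry, second shape-derivative tending to a positive limit as $B\to0$) handles the window near $a=1$. Your explicit insistence on a fixed window $[0,\eta]$ on which the second derivative stays positive uniformly in $(s,B)$ is a slightly more careful packaging of the matching step that the paper leaves implicit, but it is not a different argument.
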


The organisation of this paper is as follows.
In Section~\ref{Sec.pre} we settle the problem 
in terms of a spectral problem of a self-adjoint operator
in a fixed (\ie~$a$-independent) Hilbert space.
Section~\ref{Sec.symmetry} is not needed for the proof
of Theorem~\ref{Thm.small}, but it offers a robust way
how to establish Conjecture~\ref{Conj.main}
as a consequence of a symmetry of a non-convex optimisation.
In Section~\ref{Sec.local} we show that the square is
a local minimiser for weak magnetic fields., 
establishing thus Conjecture~\ref{Conj.main}
for rectangles close to the square.
In Section~\ref{Sec.quantitative} we establish 
upper and lower bounds to the eigenvalue
$\lambda_1^B(\Omega_{a,a^{-1}})$,
obtaining thus Conjecture~\ref{Conj.main}
for rectangles far from the square.

\section{Preliminaries}\label{Sec.pre}
The boundary value problem~\eqref{Dirichlet.magnet} 
in the rectangle $\Omega_{a,a^{-1}} =: \Omega_a$ 
is understood as the spectral problem for the self-adjoint
operator~$H_a^A$ in $\sii(\Omega_a)$
associated with the form
\begin{equation} 
%\begin{aligned}
  h_a^A[u] := \|\partial_1^A u\|^2 + \|\partial_2^A u\|^2 
  \,, 
  \qquad
  u \in \Dom(h_a^A) := W_0^{1,2}(\Omega_a)
  \,,
%\end{aligned}
\end{equation}
where~$\|\cdot\|$ denotes the norm of $\sii(\Omega_a)$ 
and we abbreviate $\partial_k^A := \partial_k-iA_j$
with $k \in \{1,2\}$.

The spectrum of~$H_a^A$ is independent of the choice
of the vector potential~$A$ giving 
the same magnetic field $\curl A = B$.
Indeed, if $\tilde{A}:\overline{\Omega_a} \to \Real^2$
is another smooth potential satisfying $\curl \tilde{A} = B$,
then $\curl(\tilde{A}-A)=0$, so there exists a smooth function 
$\phi:\overline{\Omega_a} \to \Real$ satisfying
$\tilde{A}-A = \nabla \phi$.
Consequently, $H_a^{\tilde{A}} = e^{i\phi} H_a^A e^{-i\phi}$,
which means that the operators $H_a^{\tilde{A}}$ and $H_a^A$
are unitarily equivalent, therefore isospectral.
This is the well-known gauge invariance.
Without loss of generality,
we choose the gauge
\begin{equation}\label{gauge}
  A(x) := \big(-\theta x_2,(1-\theta)x_1\big) B
  \,,
\end{equation}
where $x \in \Omega_{a}$
and $\theta,B \in \Real$ are arbitrary.

It is useful to work in a Hilbert space
independent of the parameter~$a$.
More specifically, we introduce the unitary transform
$
  U: \sii(\Omega_a)
  \to \sii(\Omega_1)
$
by setting $(Uu)(x):=u(a x_1,a^{-1} x_2)$
and define a unitarily equivalent (therefore isospectral) operator
$\hat{H}_{a}^A := U H_{a}^A U^{-1}$.
It is associated with the form~$\hat{h}_a^A$ defined by
$\hat{h}_a^A[u] := h_a^A[U^{-1}u]$ 
and $\Dom(\hat{h}_a^A) := U \Dom(h_a^A)$. 
It is easily verified that
\begin{equation} 
  \hat{h}_a^A[u] = 
  a^{-2} \, \|\partial_1^A u\|^2 
  + a^2 \, \|\partial_2^A u\|^2 
  \,, 
  \qquad
  u \in \Dom(\hat{h}_a^A) = W_0^{1,2}(\Omega_1)
  \,,
\end{equation}
where~$\|\cdot\|$ denotes the norm of $\sii(\Omega_1)$
and~$A$ still means~\eqref{gauge} but with $x \in \Omega_1$ now.  
Clearly, $\hat{H}_{1}^A = H_{1}^A$.
Moreover, $\Dom(\hat{h}_{a}^A)=\Dom(\hat{h}_{1}^A)$ for every $a>0$,
so we actually have $\Dom(\hat{h}_{a}^A) = \Dom(h_{1}^A)$.

A variational characterisation of the lowest eigenvalue
$\lambda_1^B(\Omega_a)$ reads
\begin{equation}\label{Rayleigh} 
  \lambda_1^B(\Omega_a)
  = \inf_{\stackrel[u \not= 0]{}{u \in W_0^{1,2}(\Omega_1) }} 
  \frac{\hat{h}_a^A[u]}{\, \|u\|^2}  
  \,,
\end{equation}
where the infimum can be replaced by a minimum.

\section{From symmetry to optimality}\label{Sec.symmetry}
In this section, we explain how the robust idea of~\cite{BK4} 
suggested to prove the optimality of a square among all Dirac rectangles
can be adapted to the present magnetic setting.

Using the inequality between the arithmetic and geometric means,
\eqref{Rayleigh}~implies 
\begin{equation}\label{area}
  \lambda_1^B(\Omega_a)
  \geq \inf_{\stackrel[u \not= 0]{}{u \in W_0^{1,2}(\Omega_1) }} 
  \frac{2 \, \|\partial_1^A u\| \, \|\partial_2^A u\|}{\, \|u\|^2}  
  =: \mu
  \,,
\end{equation}
The minimisation problem on the right-hand side of~\eqref{area}
does not involve a convex functional.
In fact, the associated Euler equation is a non-linear problem.
Recalling that we use the symbol $\|\cdot\|$
for the norm of $\sii(\Omega_1)$,
let us denote by $(\cdot,\cdot)$ the corresponding inner product.

\begin{Lemma}\label{Lem.minimiser}
The infimum on the right-hand side of~\eqref{area} is achieved.
Any minimiser~$u$ satisfies 
the weak eigenvalue equation 
\begin{equation}\label{Euler}
  \alpha^{-2} \, (\partial_1^A v,\partial_1^A u)
  + \alpha^2 \, (\partial_2^A v,\partial_2^A u)
  = \mu \, (v,u)
\end{equation} 
for every $v \in W_0^{1,2}(\Omega_1)$,
where
$$
  \alpha := \sqrt{\frac{\|\partial_1^A u\|}{\|\partial_2^A u\|}}
  \,.
$$
\end{Lemma}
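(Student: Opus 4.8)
The plan is to establish existence of a minimiser by the direct method of the calculus of variations and then to derive~\eqref{Euler} as the vanishing of the first variation. First I would fix a minimising sequence $(u_n) \subset W_0^{1,2}(\Omega_1)$ normalised by $\|u_n\| = 1$, so that $\|\partial_1^A u_n\|\,\|\partial_2^A u_n\| \to \mu/2$. The crucial preliminary estimate is a one-dimensional Poincar\'e inequality applied fibrewise: for fixed $x_1$ the function $x_2 \mapsto u(x_1,x_2)$ lies in $W_0^{1,2}((-\demi,\demi))$, and writing $A_2 = \partial_2\phi$ with $\phi(x_1,x_2) := (1-\theta)B x_1 x_2$ one has $\partial_2^A u = e^{i\phi}\,\partial_2(e^{-i\phi}u)$, whence $\|\partial_2^A u\| \geq c\,\|u\|$ for a constant $c>0$ depending only on $\Omega_1$; the analogous bound $\|\partial_1^A u\| \geq c\,\|u\|$ follows by exchanging the roles of the variables. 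In particular $\mu \geq 2c^2 > 0$, so the problem is non-degenerate.

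Because each factor is bounded below by $c\,\|u_n\| = c$ and their product converges, both $\|\partial_1^A u_n\|$ and $\|\partial_2^A u_n\|$ are also bounded above; since $A$ is smooth and bounded on $\overline{\Omega_1}$, this bounds $(u_n)$ in $W_0^{1,2}(\Omega_1)$. Then I would extract a subsequence with $u_n \rightharpoonup u$ weakly in $W_0^{1,2}(\Omega_1)$ and, by Rellich's theorem, $u_n \to u$ strongly in $\sii(\Omega_1)$, so that $\|u\| = 1$ and $u \neq 0$. Passing to a further subsequence along which $\|\partial_1^A u_n\| \to p_1$ and $\|\partial_2^A u_n\| \to p_2$ with $p_1 p_2 = \mu/2$, weak lower semicontinuity of each magnetic seminorm gives $\|\partial_k^A u\| \leq p_k$, hence $2\|\partial_1^A u\|\,\|\partial_2^A u\| \leq \mu\,\|u\|^2$ and therefore the Rayleigh quotient of $u$ equals $\mu$; thus $u$ is a minimiser.

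For the Euler equation, I would compute the first variation at the minimiser. Fixing $v \in W_0^{1,2}(\Omega_1)$ and $t \in \Real$, the maps $t \mapsto \|\partial_k^A(u+tv)\|$ are differentiable at $t=0$ precisely because $\|\partial_k^A u\| \geq c > 0$, with derivative $\mathrm{Re}\,(\partial_k^A v,\partial_k^A u)/\|\partial_k^A u\|$; combining this with $\frac{\der}{\der t}\|u+tv\|^2 = 2\,\mathrm{Re}\,(v,u)$ and setting the derivative of the quotient to zero yields
\begin{equation*}
  \alpha^{-2}\,\mathrm{Re}\,(\partial_1^A v,\partial_1^A u)
  + \alpha^2\,\mathrm{Re}\,(\partial_2^A v,\partial_2^A u)
  = \mu\,\mathrm{Re}\,(v,u)
  ,
\end{equation*}
after using $\mu = 2\|\partial_1^A u\|\,\|\partial_2^A u\|$ and the definition of~$\alpha$. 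Replacing $v$ by $iv$ and invoking sesquilinearity converts the real parts into the corresponding imaginary parts, and adding the two identities removes the $\mathrm{Re}$, giving exactly~\eqref{Euler} for every $v \in W_0^{1,2}(\Omega_1)$.

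I expect the only genuine obstacle to be the coercivity of the minimising sequence: since the functional is a \emph{product} rather than a sum of the two magnetic seminorms, boundedness of the product alone does not prevent one factor from degenerating to zero while the other blows up. The fibrewise Poincar\'e lower bound is what closes this gap and simultaneously guarantees $\|\partial_k^A u\| \neq 0$, which is also what makes the square roots in the functional differentiable at the minimiser; the remaining steps (weak lower semicontinuity, Rellich compactness, and the variational computation) are then routine.
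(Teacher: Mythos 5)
Your argument is correct and shares the paper's overall skeleton (direct method for existence; first variation plus the $v \mapsto iv$ substitution for the Euler equation), but the two technical ingredients are handled by genuinely different means. For the positivity of the factors, the paper chains the diamagnetic inequality with the Poincar\'e inequality, $\|\partial_k^A u\| \geq \|\partial_k|u|\| \geq \pi\,\|u\|$, whereas you strip off $A_k$ by a fibrewise gauge transformation $\partial_k^A u = e^{i\phi}\,\partial_k(e^{-i\phi}u)$ and apply the one-dimensional Dirichlet Poincar\'e inequality on each fibre; both yield the same constant, and your route avoids the diamagnetic inequality entirely. The more substantial divergence is the lower-semicontinuity step: the paper proves that $u \mapsto \|\partial_k^A u\|^2$ is lower semicontinuous on $\sii(\Omega_1)$ via a monotone approximation by bounded forms and the spectral theorem, and then uses that products and square roots of nonnegative lsc functionals are lsc. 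You instead pass to a further subsequence along which each factor $\|\partial_k^A u_n\|$ converges to some $p_k$ with $p_1 p_2 = \mu/2$, observe that $\partial_k^A u_n$ converges weakly to $\partial_k^A u$ in $\sii(\Omega_1)$ (weak $W^{1,2}$ convergence plus boundedness of $A$ and Rellich), and invoke weak lower semicontinuity of the norm to get $\|\partial_k^A u\| \leq p_k$; this is more elementary and sidesteps the operator-theoretic machinery at the modest price of an extra subsequence extraction. Likewise, your remark that boundedness of the product together with the uniform lower bound $\|\partial_k^A u_n\| \geq c$ forces each factor to be bounded replaces the paper's explicit coercivity estimate $J[u] \geq c\,\|u\|_{W^{1,2}(\Omega_1)}$, and your bound $\|\partial_k^A u\| \geq c > 0$ does double duty: it justifies the differentiability of $t \mapsto \|\partial_k^A(u+tv)\|$ at $t=0$ and the well-definedness of $\alpha$, which the paper handles by a separate contradiction argument at the end of its proof.
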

\begin{proof}
First of all, 
let us argue that 
the infimum in~\eqref{area} is indeed achieved.
Define the functional
\begin{equation*} 
  J[u] := 2\,\|\partial_1^A u\| \, \|\partial_2^A u\|
  \,, \qquad
  u \in \Dom(J) := W_0^{1,2}(\Omega_1)
  \,.
\end{equation*}
Then 
\begin{equation}\label{area.bis}  
  \mu = \inf_{\stackrel[\|u\| = 1]{}
  {u \in W_0^{1,2}(\Omega_1)}} 
  J[u] 
  \,.
\end{equation}
The functional~$J$ is coercive and lower semicontinuous.
\begin{itemize}[left=0ex,itemsep=0ex]
\item
To see that~$J$ is coercive, we observe that 
the diamagnetic and Poincar\'e inequalities imply
$
  \|\partial_k^A u\| \geq \|\partial_k |u|\| \geq \pi \|u\|
$
for every $u \in W_0^{1,2}(\Omega_1)$ and $k \in \{1,2\}$.
At the same time, 
$
  \|\partial_k^A u\|^2 
  \geq \frac{1}{2} \|\partial_k u\|^2 - \|A_k\|_\infty^2 \|u\|^2
$.
These two inequalities can be combined in an elementary way
to show that there exists a positive constant~$c$ such that
$J[u] \geq c \, \|u\|_{W^{1,2}(\Omega_1)}$. 
\item
To see that~$J$ is lower semicontinuous, we use the facts that
the product of two lower semicontinuous functions 
is lower semicontinuous and that the square root of 
a lower semicontinuous function is lower semicontinuous.
So, it remains to show that 
$u \mapsto \|\partial_1^A u\|^2$ and $u \mapsto \|\partial_2^A u\|^2$
are lower semicontinuous on $L^2(\Omega_1)$,
with the convention that the action is~$+\infty$
if $u \not\in W_0^{1,2}(\Omega_1)$.  
Let us focus on the former, the proof for the latter is analogous.
For every $n \in \Nat$, define
$$
  h_n[u] := \left(
  u,n \, (-\partial_1^A)^2 \, [n+(-\partial_1^A)^2]^{-1} u
  \right)
  \,, \qquad
  \Dom[h_n] := L^2(\Omega_1)
  \,,
$$
where $(-\partial_1^A)^2$ is understood as a self-adjoint realisation
of this operator in $\sii((-\demi.\demi))$,
subject to Dirichlet boundary conditions,
and we denote by the same symbol the operator 
$(-\partial_1^A)^2 \otimes 1$ in $\sii(\Omega_1)$.
Then~$h_n$ is bounded on the unit ball of $L^2(\Omega_1)$
and continuous.
By using the spectral theorem, 
$h_n[u]$ increases monotonically to $h[u] := \|\partial_1^A u\|^2$
for every $u \in \sii(\Omega_1)$
(recall that $h[u]=+\infty$ if $u \not\in W_0^{1,2}(\Omega_1)$).
This implies that~$h$ is lower semicontinuous.
\end{itemize}

\noindent
Let $\{u_j\}_{j \in \Nat}$ be a minimising sequence,
\ie\ $J[u_j] \to \mu$ as $j \to \infty$ 
and $\|u_j\|=1$ for every $j \in \Nat$.
Consequently,  
\begin{equation*}%\label{bound} 
  c \, \|\nabla u_j\|
  \leq J[u_j] = \mu
  \,.
\end{equation*}
It follows that $\{u_j\}_{j \in \Nat}$
is a bounded sequence in $W^{1,2}(\Omega_1)$.
Therefore, up to a subsequence, 
$\{u_j\}_{j \in \Nat}$ converges weakly to some~$\psi$ 
in $W^{1,2}(\Omega_1)$.
By the compactness of the embedding
$W^{1,2}(\Omega_1)$ in $\sii(\Omega_1)$,
we may assume that $\{u_j\}_{j \in \Nat}$ converges (strongly) 
to some~$u$ in $\sii(\Omega_1)$
such that $\|u\|=1$.
By using~$u$ as a trial function in~\eqref{area.bis}, 
we obviously have $\mu \leq J[u]$. 
On the other hand, 
\begin{equation*}
  \mu = \liminf_{j\to \infty} J[u_j] 
  \geq J[u]
  \,,
\end{equation*}
where the inequality follows by the property that~$J$
is lower semicontinuous.
In summary, $\mu = J[u]$, 
so the infimum in~\eqref{area.bis} can be replaced by a minimum.

Now, let~$u$ be any minimiser of~\eqref{area.bis}.
Then~$u$ is a critical point of the functional~$J$
and the derivative  
$$
  \lim_{\eps \to 0}
  \frac{1}{\eps}
  \left(
  \frac{J[u+\eps v]}{\|u+\eps v\|^2}
  - \frac{J[u]}{\|u\|^2}
  \right)
$$
is necessarily equal to zero for any choice 
of the test function $v \in W_0^{1,2}(\Omega_1)$.
This leads to the equation
\begin{equation*}
  \alpha^{-2} \, \Re(\partial_1^A v,\partial_1^A u)
  + \alpha^2 \, \Re(\partial_2^A v,\partial_2^A u)
  = \mu \, \Re(v,u)
  \,.
\end{equation*}
Combining this equation with its variant where~$v$ is replaced by $iv$,
it is clear that the real part can be removed,
so~\eqref{Euler} follows. 
 
Finally, let us argue that~\eqref{Euler} is well defined,
meaning that~$\alpha$ is positive and bounded. 
If $\|\partial_1^A u\|=0$, 
then the diamagnetic inequality implies $\|\partial_1 |u| \|=0$, 
so~$|u|$ is independent of the first variable,
which is incompatible with $|u| \in W_0^{1,2}(\Omega_1))$
unless $|u| = 0$ identically. 
An analogous argument excludes the possibility $\|\partial_2^A u\|=0$. 
\end{proof}

The following symmetry is naturally expected 
for the symmetric gauge~\eqref{gauge} with $\theta=\frac{1}{2}$.
\begin{Conjecture}\label{Conj.symmetry}
If $\theta=\frac{1}{2}$,
then there exists a minimiser~$u$
of the right-hand side of~\eqref{area}
which satisfies
\begin{equation}\label{symmetry}
  \|\partial_1^A u\| = \|\partial_2^A u\|
  \,.
\end{equation}
\end{Conjecture}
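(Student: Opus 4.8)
The plan is to exploit an antiunitary symmetry of the functional~$J$ that interchanges the two directional energies $\|\partial_1^A u\|$ and $\|\partial_2^A u\|$, and then to reduce the claimed identity~\eqref{symmetry} to the uniqueness of the optimal geometry. Let $S:(x_1,x_2)\mapsto(x_2,x_1)$ denote the reflection across the diagonal and define $V:\sii(\Omega_1)\to\sii(\Omega_1)$ by $(Vu)(x):=\overline{u(Sx)}$. Since~$\Omega_1$ is invariant under~$S$ and the Dirichlet condition is preserved by~$S$ and by complex conjugation, $V$ maps $W_0^{1,2}(\Omega_1)$ onto itself, it is an antiunitary involution, and $\|Vu\|=\|u\|$. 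For the symmetric gauge $\theta=\frac{1}{2}$ one has $A=\frac{B}{2}(-x_2,x_1)$, and a direct differentiation gives
\begin{equation*}
  \partial_1^A(Vu) = \overline{(\partial_2^A u)\circ S}
  \,, \qquad
  \partial_2^A(Vu) = \overline{(\partial_1^A u)\circ S}
  \,.
\end{equation*}
Hence $\|\partial_1^A(Vu)\|=\|\partial_2^A u\|$ and $\|\partial_2^A(Vu)\|=\|\partial_1^A u\|$, so $J[Vu]=J[u]$ and~$V$ preserves the set of minimisers of~\eqref{area.bis}, sending a minimiser with ratio~$\alpha$ to one with ratio~$\alpha^{-1}$.

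This yields the symmetry at once whenever the minimiser is rigid. If the minimiser of~\eqref{area.bis} is unique up to a multiplicative phase, then $Vu=e^{i\gamma}u$ for some $\gamma\in\Real$; applying $\partial_1^A$ and using the displayed identity gives $\|\partial_2^A u\|=\|\partial_1^A(Vu)\|=\|\partial_1^A u\|$, which is precisely~\eqref{symmetry}. The same computation shows that $V$ commutes with $H_1^A$, so if the eigenvalue $\lambda_1^B(\Omega_1)$ is \emph{simple} then its ground state is $V$-invariant up to a phase and therefore satisfies~\eqref{symmetry}; the only remaining point would then be to know that this ground state is an actual minimiser of~$J$, \ie\ that $\beta=1$ realises the infimum~$\mu$.

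To isolate that remaining point I would reformulate the problem macroscopically. Optimising the arithmetic--geometric bound in~\eqref{area} over the aspect ratio shows that $\mu=\inf_{\beta>0}\lambda_1^B(\Omega_\beta)$, the infimum being attained exactly at $\beta=\alpha$ for a minimiser~$u$ of~$J$. At the level of geometries the symmetry~$V$ becomes the reflection identity $\lambda_1^B(\Omega_\beta)=\lambda_1^B(\Omega_{1/\beta})$, so $t\mapsto\lambda_1^B(\Omega_{e^{t}})$ is an even function of $t=\log\beta$. In these terms~\eqref{symmetry} is \emph{equivalent} to $\beta=1$ being a minimiser of $\beta\mapsto\lambda_1^B(\Omega_\beta)$, and by the reflection identity this holds as soon as the optimal aspect ratio is unique. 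I would therefore aim at one of two targets: either (i) prove that $t\mapsto\lambda_1^B(\Omega_{e^t})$ is convex, so that evenness forces the minimum at $t=0$; or (ii) combine the simplicity of $\lambda_1^B(\Omega_\beta)$ (which, by analytic perturbation theory, makes the ground state depend analytically on~$\beta$ and legitimises a Hadamard-type second-variation formula at $\beta=1$) with a continuation argument ruling out competing minimisers.

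The main obstacle is exactly this uniqueness, and it is genuine precisely because~$J$ is non-convex. A priori the even function $t\mapsto\lambda_1^B(\Omega_{e^t})$ could have a symmetric double-well profile, with two global minimisers at $\beta_0$ and $\beta_0^{-1}\ne1$ and a local maximum (symmetry breaking) at $\beta=1$; in that scenario both~$u$ and~$Vu$ have $\alpha\ne1$, no minimiser of~$J$ is symmetric, and Conjecture~\ref{Conj.symmetry} fails together with Conjecture~\ref{Conj.main}. Excluding this requires controlling the sign of the second variation at $\beta=1$ (to rule out the saddle) together with a global estimate preventing distant wells; the former is carried out for weak fields in Section~\ref{Sec.local}, but promoting it to all~$B$---equivalently, establishing the log-convexity or the global simplicity invoked above---is where the magnetic problem defeats the standard toolbox, since the complex-valued ground state offers no Perron--Frobenius positivity from which simplicity, and hence rigidity of the minimiser, could be read off.
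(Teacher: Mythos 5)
Be aware first that the statement you were asked to prove is labelled a \emph{Conjecture}: the paper offers no proof of it, and explicitly states that it has not been able to establish it. What the paper does provide is the remark immediately following the statement, namely that the conjecture would follow from \emph{uniqueness} of the minimiser of~\eqref{area.bis}, via the unitary rotation $u\mapsto u(-x_2,x_1)$ which swaps $\|\partial_1^A u\|$ and $\|\partial_2^A u\|$. Your proposal is honest about not closing the argument and lands on exactly the same conditional reduction, so there is no hidden gap you have missed that the author resolves: the uniqueness (or, equivalently, the rigidity of the optimal aspect ratio) is precisely the open obstruction, and your closing paragraph identifies it correctly, including the double-well scenario that would break both conjectures simultaneously.

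Within that caveat, your route differs from the paper's in two respects, both sound. First, you implement the energy-swapping symmetry by the \emph{antiunitary} map $(Vu)(x)=\overline{u(x_2,x_1)}$ rather than the unitary rotation by $\pi/2$; your computation $\partial_1^A(Vu)=\overline{(\partial_2^A u)\circ S}$ checks out for the gauge $\theta=\frac12$, and either symmetry serves equally well for the uniqueness-implies-symmetry step. Second, your macroscopic reformulation is a genuine (if modest) sharpening of what the paper states: the identity $\mu=\inf_{\beta>0}\lambda_1^B(\Omega_\beta)$, attained at $\beta=\alpha$, shows that Conjecture~\ref{Conj.symmetry} is \emph{equivalent} to Conjecture~\ref{Conj.main}, whereas the paper's Theorem~\ref{Thm.idea} records only the forward implication. (The converse is immediate from your chain $\lambda_1^B(\Omega_1)=\|\partial_1^Au_1\|^2+\|\partial_2^Au_1\|^2\geq 2\|\partial_1^Au_1\|\,\|\partial_2^Au_1\|\geq\mu$, which is forced to be an equality throughout, so AM--GM saturates and the ground state itself is a symmetric minimiser of~$J$.) One small refinement you could add: simplicity of $\lambda_1^B(\Omega_1)$ is not needed to produce a \emph{symmetric eigenfunction} of $\hat H_1^A$ --- since the rotation generates a finite cyclic group, the ground eigenspace always contains an eigenvector of the rotation, which automatically satisfies~\eqref{symmetry}; the paper uses this in the proofs of Theorem~\ref{Thm.idea} and Lemma~\ref{Lem.derivatives}. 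But, as you say, this does not show that eigenfunction minimises~$J$, which is where the matter rests.
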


Note that this conjecture is true provided that 
the minimisation problem on the right-hand side of~\eqref{area}
admits a \emph{unique} minimiser.
Indeed, it is easy to see that if~$u$ is a minimiser,
then so is the rotated function $x \mapsto u(-x_2,x_1) =: v(x)$. 
The uniqueness of the minimiser together with the identities
$\|\partial_1^A v\| = \|\partial_2^A u\|$
and $\|\partial_2^A v\| = \|\partial_1^A u\|$
then implies~\eqref{symmetry}.

Because of the non-linear structure of the minimisation problem
and the lack of positivity preserving property 
for the magnetic Laplacian,
we have been able to establish 
neither the uniqueness of the minimiser 
nor Conjecture~\ref{Conj.symmetry}, respectively.
This is unfortunate, because its validity  
immediately implies Conjecture~\ref{Conj.main}.

\begin{Theorem}\label{Thm.idea}
Conjecture~\ref{Conj.symmetry} implies Conjecture~\ref{Conj.main}.
\end{Theorem}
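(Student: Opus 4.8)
The plan is to use Conjecture~\ref{Conj.symmetry} to identify the lower bound~$\mu$ of~\eqref{area} with the square eigenvalue $\lambda_1^B(\Omega_{1,1})$; once this identification is made, Conjecture~\ref{Conj.main} is immediate from~\eqref{area}. Throughout I would work with the symmetric gauge $\theta=\frac{1}{2}$, which is harmless since~$\mu$ is gauge invariant.

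First I would note that~$\mu$ is squeezed between two bounds. The upper bound is cheap: applying the arithmetic--geometric mean inequality to the Rayleigh quotient for~$\hat{h}_1^A$ exactly as in~\eqref{area}, but at $a=1$, gives $\lambda_1^B(\Omega_{1,1}) \geq \mu$. For the reverse inequality I would invoke the symmetric minimiser~$u$ furnished by Conjecture~\ref{Conj.symmetry}.

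The crux is that the symmetry~\eqref{symmetry} makes the ratio~$\alpha$ of Lemma~\ref{Lem.minimiser} equal to~$1$. Setting $\alpha=1$ in the Euler equation~\eqref{Euler} collapses it into
\begin{equation*}
  (\partial_1^A v,\partial_1^A u)
  + (\partial_2^A v,\partial_2^A u)
  = \mu \, (v,u)
  \,,
\end{equation*}
valid for every $v \in W_0^{1,2}(\Omega_1)$. This is precisely the weak eigenvalue equation for the form $\hat{h}_1^A = h_1^A$, so~$u$ is an eigenfunction of $H_1^A = \hat{H}_1^A$ with eigenvalue~$\mu$. In particular~$\mu$ belongs to the spectrum of~$H_1^A$, whence $\mu \geq \lambda_1^B(\Omega_{1,1})$ because the latter is its lowest eigenvalue.

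Combining the two bounds yields $\mu = \lambda_1^B(\Omega_{1,1})$, and then~\eqref{area} reads $\lambda_1^B(\Omega_a) \geq \mu = \lambda_1^B(\Omega_{1,1})$ for every $a>0$, which is Conjecture~\ref{Conj.main}. Since the entire argument rests on the single algebraic fact that the symmetry forces $\alpha=1$ and thereby turns the non-linear Euler equation into the genuine linear eigenvalue problem, no essential obstacle remains once Conjecture~\ref{Conj.symmetry} is granted; the only point needing care is verifying that the sesquilinear form of~$\hat{H}_1^A$ is exactly the one appearing in the displayed equation above, so that~$u$ qualifies as a bona fide eigenfunction rather than merely a critical point of a nonlinear functional.
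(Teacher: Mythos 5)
Your proof is correct, but the mechanism differs from the paper's. The paper compares two \emph{restricted} infima: it observes that, granted Conjecture~\ref{Conj.symmetry}, the infimum in~\eqref{area} may be taken over functions satisfying~\eqref{symmetry}, on which the arithmetic--geometric mean inequality is an equality, and that $\lambda_1^B(\Omega_1)$ equals the same restricted infimum because the rotational symmetry of the square produces an eigenfunction of~$\hat{H}_1$ satisfying~\eqref{symmetry}. You instead feed the symmetric minimiser into the Euler equation~\eqref{Euler} of Lemma~\ref{Lem.minimiser}, note that~\eqref{symmetry} forces $\alpha=1$, and conclude that the minimiser is a genuine eigenfunction of $\hat{H}_1^A$ with eigenvalue~$\mu$, whence $\mu\geq\lambda_1^B(\Omega_{1,1})$; this has the merit of not relying on the existence of a symmetric eigenfunction of~$\hat{H}_1$ (a fact the paper asserts without proof), and it yields the stronger conclusion $\mu=\lambda_1^B(\Omega_{1,1})$. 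Two small remarks: the upper bound $\lambda_1^B(\Omega_{1,1})\geq\mu$ and hence the exact identification of~$\mu$ are not needed for Conjecture~\ref{Conj.main} --- only $\lambda_1^B(\Omega_a)\geq\mu\geq\lambda_1^B(\Omega_{1,1})$ is used; and the lower bound itself can be had without the Euler equation at all, since for the normalised symmetric minimiser $\mu=2\,\|\partial_1^A u\|\,\|\partial_2^A u\|=\|\partial_1^A u\|^2+\|\partial_2^A u\|^2=\hat{h}_1^A[u]\geq\lambda_1^B(\Omega_{1,1})$ directly by~\eqref{Rayleigh}, which is in essence the paper's argument.
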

\begin{proof}
Let us assume $\theta=\frac{1}{2}$.
As a consequence of~\eqref{area} 
and Conjecture~\ref{Conj.symmetry},
\begin{equation}\label{comp1}  
  \lambda_1^B(\Omega_a)
  \geq \inf_{\stackrel[u \not= 0]{}
  {u \in W_0^{1,2}(\Omega_1) \ \& \ \eqref{symmetry} \text{ holds}}} 
  \frac{2\,\|\partial_1^A u\| \, \|\partial_2^A u\|}
  {\, \|u\|^2}  
\end{equation}
for every $a>0$ and $B \in \Real$.
At the same time, because of the rotational symmetry 
of the square~$\Omega_1$,
it is easy to see that there exists an eigenfunction~$u$ of~$\hat{H}_1$
satisfying~\eqref{symmetry}. 
Consequently,
\begin{equation}\label{comp2} 
  \lambda_1^B(\Omega_1)
  = \inf_{\stackrel[u \not= 0]{}
  {\psi \in W_0^{1,2}(\Omega_1) \ \& \ \eqref{symmetry} \text{ holds}}} 
  \frac{\|\partial_1^A u\|^2 + \|\partial_2^A u\|^2}
  {\, \|u\|^2}  
  = \inf_{\stackrel[u \not= 0]{}
  {\psi \in W_0^{1,2}(\Omega_1) \ \& \ \eqref{symmetry} \text{ holds}}} 
  \frac{2\,\|\partial_1^A u\| \, \|\partial_2^A u\|}
  {\, \|u\|^2}  
  \,.
\end{equation}
Comparing~\eqref{comp1} and~\eqref{comp2}, 
we get Conjecture~\ref{Conj.main}.
\end{proof}

\section{From simplicity to local optimality}\label{Sec.local}
Unable to prove Conjecture~\ref{Conj.main} in its full generality,
in this section we focus on showing that the square~$\Omega_1$
is at least a local \emph{minimiser} among all rectangles~$\Omega_a$
with $a>0$. 

For simplicity, we abbreviate $\lambda_a := \lambda_1^B(\Omega_a)$.

\begin{Lemma}\label{Lem.derivatives}
Assume $\theta=\frac{1}{2}$.
Let $B \in \Real$ be such that
the eigenvalue $\lambda_1$ is simple. 
Then
\begin{align} 
  \left.
  \frac{\partial \lambda_1^B(\Omega_a)}{\partial a}
  \right|_{a=1} &= 0
  \,,
  \label{derivatives1}
  \\
  \frac{1}{2}
  \left.
  \frac{\partial^2 \lambda_1^B(\Omega_a)}{\partial a^2}
  \right|_{a=1} &= 2 \, \lambda_1^B(\Omega_1)
  + \lambda_1^B(\Omega_1) \, \|\dot{u}_1\|^2
  - \|\partial_1^A \dot{u}_1\|^2 - \|\partial_2^A \dot{u}_1\|^2
  \,,
  \label{derivatives2}
\end{align}
where~$\dot{u}_1$ is the solution of the problem 
\begin{equation}\label{weak.dot} 
  (\partial_1^A v,\partial_1^A \dot{u}_1) 
  + (\partial_2^A v,\partial_2^A \dot{u}_1)
  - \lambda_1^B(\Omega_1) \, (v,\dot{u}_1)
  =
  2 \, (\partial_1^A v,\partial_1^A u_1) 
  - 2 \, (\partial_2^A v,\partial_2^A u_1) 
\end{equation}
for every $v \in W_0^{1,2}(\Omega_1)$,
with~$u_1$ being the eigenfunction of~$\hat{H}_1$
corresponding to $\lambda_1^B(\Omega_1)$ and normalised by $\|u_1\|=1$.
\end{Lemma}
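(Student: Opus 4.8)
The plan is to treat $a \mapsto \hat{H}_a^A$ as a real-analytic family of self-adjoint operators and to differentiate the lowest eigenvalue twice via regular perturbation theory. Since all the forms $\hat{h}_a^A$ share the fixed domain $W_0^{1,2}(\Omega_1)$ and their coefficients $a^{-2}$ and $a^2$ are real-analytic on $(0,\infty)$, the family is holomorphic of type (B) in Kato's sense. The standing hypothesis that $\lambda_1 := \lambda_1^B(\Omega_1)$ is simple then guarantees that, for $a$ in a neighbourhood of~$1$, both the eigenvalue $\lambda(a) := \lambda_1^B(\Omega_a)$ and a suitably normalised eigenfunction $u_a$ (with $\|u_a\|=1$) depend real-analytically on~$a$. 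Throughout I write $q_a(v,w) := a^{-2}(\partial_1^A v,\partial_1^A w) + a^2(\partial_2^A v,\partial_2^A w)$ for the sesquilinear form of $\hat{h}_a^A$, so that the weak eigenvalue equation reads $q_a(v,u_a) = \lambda(a)\,(v,u_a)$ for all $v \in W_0^{1,2}(\Omega_1)$, and I set $\dot{u}_1 := \partial_a u_a|_{a=1}$.

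For \eqref{derivatives1} I would use the Feynman--Hellmann formula. Differentiating $\lambda(a) = \hat{h}_a^A[u_a]$ and using the eigenvalue equation with $v=\dot{u}_a$ together with $\partial_a\|u_a\|^2 = 2\Re(\dot u_a,u_a) = 0$ to cancel the terms containing $\dot{u}_a$, one is left with $\lambda'(a) = -2a^{-3}\|\partial_1^A u_a\|^2 + 2a\,\|\partial_2^A u_a\|^2$, whence $\lambda'(1) = 2\big(\|\partial_2^A u_1\|^2 - \|\partial_1^A u_1\|^2\big)$. It then remains to show the symmetry $\|\partial_1^A u_1\| = \|\partial_2^A u_1\|$. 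For $\theta=\frac{1}{2}$ the gauge~\eqref{gauge} is the symmetric one, and the quarter-turn $x \mapsto (-x_2,x_1)$ preserves both~$\Omega_1$ and $\hat{H}_1^A = H_1^A$; as already noted below~\eqref{comp2}, the rotated eigenfunction is again a ground state and swaps the two derivative norms. Since $\lambda_1$ is simple, the rotated eigenfunction is a scalar multiple of~$u_1$, forcing $\|\partial_1^A u_1\| = \|\partial_2^A u_1\|$ and hence~\eqref{derivatives1}.

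Next I would obtain~\eqref{weak.dot} by differentiating the weak eigenvalue equation $q_a(v,u_a) = \lambda(a)(v,u_a)$ in~$a$ at $a=1$, using $\lambda'(1)=0$ just established: the coefficient derivatives produce the right-hand side $2(\partial_1^A v,\partial_1^A u_1) - 2(\partial_2^A v,\partial_2^A u_1)$, while the remaining terms give the left-hand side of~\eqref{weak.dot} with $\dot{u}_1$ in place of the unknown, thereby identifying $\dot{u}_1$ as a solution of~\eqref{weak.dot}. Existence is also guaranteed intrinsically by the Fredholm alternative: simplicity of~$\lambda_1$ renders $\hat{H}_1-\lambda_1$ invertible on $\{u_1\}^\perp$, and the right-hand side of~\eqref{weak.dot} annihilates the kernel, since testing it against $v=u_1$ gives $2\big(\|\partial_1^A u_1\|^2 - \|\partial_2^A u_1\|^2\big) = 0$ by the symmetry above. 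Any two solutions differ by a multiple of~$u_1$, under which the combination appearing in~\eqref{derivatives2} is unchanged (because $q_1(u_1,\cdot) = \lambda_1(u_1,\cdot)$), so the statement is unambiguous.

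Finally, for~\eqref{derivatives2} I would differentiate $\lambda'(a) = -2a^{-3}\|\partial_1^A u_a\|^2 + 2a\,\|\partial_2^A u_a\|^2$ once more and set $a=1$, using $\partial_a\|\partial_k^A u_a\|^2 = 2\Re(\partial_k^A\dot{u}_1,\partial_k^A u_1)$. The coefficient terms combine, via $\|\partial_1^A u_1\|^2 = \|\partial_2^A u_1\|^2 = \frac{1}{2}\lambda_1$, into $4\lambda_1$, while the cross terms yield $-4\big[\Re(\partial_1^A\dot{u}_1,\partial_1^A u_1) - \Re(\partial_2^A\dot{u}_1,\partial_2^A u_1)\big]$. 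Testing~\eqref{weak.dot} with $v=\dot{u}_1$ shows the bracketed quantity equals $\frac{1}{2}\big(\|\partial_1^A\dot{u}_1\|^2 + \|\partial_2^A\dot{u}_1\|^2 - \lambda_1\|\dot{u}_1\|^2\big)$, and substituting gives exactly $\frac{1}{2}\lambda''(1) = 2\lambda_1 + \lambda_1\|\dot{u}_1\|^2 - \|\partial_1^A\dot{u}_1\|^2 - \|\partial_2^A\dot{u}_1\|^2$, which is~\eqref{derivatives2}. I expect the only genuinely delicate point to be the rigorous justification of the analytic dependence of $u_a$ on~$a$, and hence the very existence of~$\dot{u}_1$, which is where simplicity is essential; the rotation argument producing $\lambda'(1)=0$ is the other place where the hypothesis $\theta=\frac{1}{2}$ is used crucially.
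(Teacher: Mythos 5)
Your proposal is correct and follows essentially the same route as the paper: Kato's analytic perturbation theory for the type~(B) family $\{\hat{H}_a^A\}_{a>0}$, the Feynman--Hellmann/weak-equation differentiation giving $\dot\lambda_1=2(\|\partial_2^A u_1\|^2-\|\partial_1^A u_1\|^2)=0$ via the quarter-turn symmetry of the symmetric gauge, and a second differentiation combined with \eqref{weak.dot} tested against $\dot u_1$ to obtain \eqref{derivatives2}. The extra details you supply (simplicity forcing the rotated ground state to be a phase multiple of $u_1$, the Fredholm-alternative solvability of \eqref{weak.dot}, and the invariance of \eqref{derivatives2} under the ambiguity $\dot u_1\mapsto\dot u_1+cu_1$) are correct refinements of points the paper treats more tersely.
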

\begin{proof}
It is straightforward to verify that 
$\{\hat{h}_a^A\}_{a > 0}$ 
is a holomorphic family of forms of type~(a) in the sense of Kato 
\cite[Sec.~VII.4]{Kato}. 
Indeed, one can directly use the criterion \cite[Sec.~VII.4.8]{Kato}.
Consequently, $\{\hat{H}_a^A\}_{a > 0}$ 
is a holomorphic family of operators of type~(B).
Because of the simplicity hypothesis,
$a \mapsto \lambda_1^B(\Omega_a) =: \lambda_a$
is a real-analytic function on a neighbourhood of~$a=1$.
At the same time, the corresponding eigenfunction~$u_a$ 
satisfying the normalisation $\|u_a\|=1$ 
is a real-analytic function in the topology of $W_0^{1,2}(\Omega_1)$
on a neighbourhood of~$a=1$.
Then the claims follow by a routine differentiation 
of the weak formulation of the eigenvalue equation
\begin{equation}\label{weak}  
  a^{-2} \, (\partial_1^A v,\partial_1^A u_a) 
  + a^2 \, (\partial_2^A v,\partial_2^A u_a)
  = \lambda_a \, (v,u_a)
\end{equation}
for every $v \in W_0^{1,2}(\Omega_1)$.

In detail,  
differentiating~\eqref{weak} with respect to~$a$
and denoting the corresponding derivative by a dot, we find 
\begin{equation}\label{first}  
  a^{-2} \, (\partial_1^A v,\partial_1^A \dot{u}_a) 
  + a^2 \, (\partial_2^A v,\partial_2^A \dot{u}_a)
  - 2 a^{-3} \, (\partial_1^A v,\partial_1^A u_a) 
  + 2a \, (\partial_2^A v,\partial_2^A u_a) 
  = \lambda_a \, (v,\dot{u}_a)
  + \dot\lambda_a \, (v,u_a)
  \,.
\end{equation}
This equation reduces to~\eqref{weak.dot} for $a=1$.
Taking $v=u_\alpha$ in~\eqref{first},
$v = \dot{u}_\alpha$ in~\eqref{weak}     
and combining these two identities evaluated at $a=1$,
we find
\begin{equation}\label{first.derivative}
  \dot\lambda_1 = -2 \, \|\partial_1^A u_1\|^2
  + 2 \, \|\partial_2^A u_1\|^2
  = 0
  \,,
\end{equation}
where the second equality follows by the rotational symmetry 
of the square.
Indeed, $u_1$~necessarily satisfies~\eqref{symmetry}
with~$u_1$ instead of~$u$.  
This establishes~\eqref{derivatives1}. 

Differentiating~\eqref{first} with respect to~$a$, we find
\begin{multline}\label{second}
  a^{-2} \, (\partial_1^A v,\partial_1^A \ddot{u}_a) 
  + a^2 \, (\partial_2^A v,\partial_2^A \ddot{u}_a)
  - 4 a^{-3} \, (\partial_1^A v,\partial_1^A \dot{u}_a) 
  + 4a \, (\partial_2^A v,\partial_2^A \dot{u}_a) 
  \\
  + 6 a^{-4} \, (\partial_1^A v,\partial_1^A u_a) 
  + 2 \, (\partial_2^A v,\partial_2^A u_a) 
  = \lambda_a \, (v,\ddot{u}_a)
  + 2 \dot\lambda_a \, (v,\dot{u}_a)
  + \ddot\lambda_a \, (v,u_a)
  \,.
\end{multline}
Taking $v=u_\alpha$ in~\eqref{second},
$v = \ddot{u}_\alpha$ in~\eqref{weak},     
combining these two identities evaluated at $a=1$
and using~\eqref{first.derivative},
we find
$$
\begin{aligned}
  \ddot\lambda_1 
  &= 6 \, \|\partial_1^A u_1\|^2
  + 2 \, \|\partial_2^A u_1\|^2
  - 4 \, (\partial_1^A u_1,\partial_1^A \dot{u}_1) 
  + 4 \, (\partial_2^A u_1,\partial_2^A \dot{u}_1) 
  \,.
\end{aligned}  
$$
Using the symmetry and~\eqref{weak.dot},
we arrive at~\eqref{derivatives2}. 
\end{proof}

The desired local optimality of the square is equivalent to
showing that the second derivative of $\lambda_1^B(\Omega_a)$
with respect to~$a$ is positive.
However, it is not clear whether the right-hand side 
of~\eqref{derivatives2} is positive. 
We prove it for weak magnetic fields. 

\begin{Theorem}\label{Thm.local}
There exists a positive constant~$C$ such that,
for every $|B| \leq C$, 
\begin{equation}\label{derivatives} 
  \left.
  \frac{\partial \lambda_1^B(\Omega_a)}{\partial a}
  \right|_{a=1} = 0
  \qquad \mbox{and} \qquad 
  \left.
  \frac{\partial^2 \lambda_1^B(\Omega_a)}{\partial a^2}
  \right|_{a=1} > 0 
  \,.
\end{equation}
\end{Theorem}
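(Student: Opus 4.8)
The plan is to treat~$B$ as a perturbation parameter and to reduce the theorem to a continuity argument anchored at $B=0$, where every quantity is explicit. The first identity in~\eqref{derivatives} is already delivered by Lemma~\ref{Lem.derivatives}, \emph{provided} the eigenvalue $\lambda_1^B(\Omega_1)$ is simple; so the opening step is to secure this simplicity for small~$|B|$. At $B=0$ the operator $\hat{H}_1^A$ reduces to the Dirichlet Laplacian on the unit square, whose lowest eigenvalue $2\pi^2$ is simple, with normalised eigenfunction $u_1 = 2\cos(\pi x_1)\cos(\pi x_2)$, the next eigenvalue being $5\pi^2$. Because the gauge~\eqref{gauge} is linear in~$B$, the form $\hat{h}_1^A$ is a quadratic polynomial in~$B$, hence $\{\hat{H}_1^A\}$ is a holomorphic family of type~(B) in the parameter~$B$. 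Standard analytic perturbation theory \cite[Sec.~VII.4]{Kato} then guarantees that $\lambda_1^B(\Omega_1)$ stays simple and that both $\lambda_1^B(\Omega_1)$ and~$u_1$ depend analytically on~$B$ on some interval $|B| \leq C_1$. On this interval Lemma~\ref{Lem.derivatives} applies, yielding~\eqref{derivatives1} and the formula~\eqref{derivatives2}.

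The core of the argument is to show that the right-hand side of~\eqref{derivatives2} is a continuous function of~$B$ which is strictly positive at $B=0$. The only non-explicit ingredient is~$\dot{u}_1$, the solution of~\eqref{weak.dot}. Since $\lambda_1^B(\Omega_1)$ is simple, the operator $\hat{H}_1^A - \lambda_1^B(\Omega_1)$ is boundedly invertible on the orthogonal complement of~$u_1$, and its reduced resolvent depends analytically on~$B$. The source term on the right of~\eqref{weak.dot} is orthogonal to~$u_1$ precisely because $\dot\lambda_1 = 0$ by~\eqref{first.derivative}, which in turn rests on the symmetry~\eqref{symmetry} of~$u_1$---itself a consequence of simplicity via the argument following Conjecture~\ref{Conj.symmetry}. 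Normalising $\dot{u}_1$ by $(u_1,\dot{u}_1) = 0$, we conclude that $\dot{u}_1$ depends analytically on~$B$ as well, so every term in~\eqref{derivatives2} is continuous in~$B$.

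It then remains to evaluate~\eqref{derivatives2} at $B=0$. Here the source term in~\eqref{weak.dot} vanishes identically: integrating by parts and using $-\partial_1^2 u_1 = \pi^2 u_1 = -\partial_2^2 u_1$, the right-hand side of~\eqref{weak.dot} equals $2\pi^2(v,u_1) - 2\pi^2(v,u_1) = 0$ for every~$v$. Hence the normalised solution is $\dot{u}_1 = 0$, and~\eqref{derivatives2} collapses to $\frac{1}{2}\ddot\lambda_1|_{B=0} = 2\,\lambda_1^0(\Omega_1) = 4\pi^2 > 0$. As a consistency check, the exact expression $\lambda_1^0(\Omega_a) = \pi^2(a^{-2}+a^2)$ gives $\ddot\lambda_1|_{B=0} = 8\pi^2$, in agreement. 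By the continuity established above, the right-hand side of~\eqref{derivatives2} stays positive for $|B| \leq C_2$ with some $C_2 > 0$; taking $C := \min\{C_1,C_2\}$ proves the second inequality in~\eqref{derivatives}.

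The main obstacle, and the step demanding the most care, is the uniform control of~$\dot{u}_1$ as $B \to 0$: one must verify that the reduced resolvent and the source functional in~\eqref{weak.dot} depend continuously on~$B$ in a topology strong enough to pass to the limit inside the quadratic expressions $\|\partial_k^A \dot{u}_1\|^2$, so that $\dot{u}_1 \to 0$ in $W_0^{1,2}(\Omega_1)$. This hinges entirely on the simplicity of $\lambda_1^B(\Omega_1)$, which keeps the spectral gap open and the reduced resolvent bounded uniformly near the origin.
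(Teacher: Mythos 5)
Your proposal is correct and follows essentially the same route as the paper: analytic perturbation theory in~$B$ to secure simplicity of $\lambda_1^B(\Omega_1)$ near $B=0$, invocation of Lemma~\ref{Lem.derivatives}, and a continuity argument showing the right-hand side of~\eqref{derivatives2} tends to $4\pi^2$ as $B\to0$ because the source term in~\eqref{weak.dot} vanishes at $B=0$. Your explicit treatment of the reduced resolvent and the normalisation $(u_1,\dot{u}_1)=0$ merely spells out details the paper leaves implicit.
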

\begin{proof}
Recall that we use the gauge~\eqref{gauge} 
explicitly depending on~$B$. 
It is straightforward to verify that 
$\{\hat{h}_a^A\}_{B \in \Real}$ 
is a holomorphic family of forms of type~(a).  
Again, one can directly use the criterion \cite[Sec.~VII.4.8]{Kato}.
Consequently, $\{\hat{H}_a^A\}_{B \in \Real}$ 
is a holomorphic family of operators of type~(B).
Since $\lambda_1^0(\Omega_a)$ is simple for every $a>0$, 
there exists a positive constant~$C$ such that 
$\lambda_1^B(\Omega_1)$ remains simple 
whenever $|B| \leq C$.
It follows that $B \mapsto \lambda_1^B(\Omega_1) =: \lambda^B$
is a real-analytic function on a neighbourhood of $B=0$.
At the same time, the corresponding eigenfunction~$u^B$ 
satisfying the normalisation $\|u^B\|=1$ 
is a real-analytic function in the topology of $W_0^{1,2}(\Omega_1)$
on a neighbourhood of $B=0$.

Assuming $\theta=\frac{1}{2}$,
Lemma~\ref{Lem.derivatives} implies the first identity 
of~\eqref{derivatives}.
However, the hypothesis $\theta=\frac{1}{2}$ is redundant,
because $\lambda_1^B(\Omega_a)$ is gauge invariant.

At the same time, under the hypothesis $\theta=\frac{1}{2}$,
Lemma~\ref{Lem.derivatives} implies the identity~\eqref{derivatives2}.
Here the first term on the right-hand side 
is positive for all sufficiently small~$|B|$;
explicitly, $\lambda^B \to 2\pi^2$ as $B \to 0$.
At the same time, the right-hand side of~\eqref{weak.dot} 
tends to zero as $B \to 0$.
Indeed, $u^0(x_1,x_2) = \varphi(x_1) \varphi(x_2)$,
where 
\begin{equation}\label{ground} 
  \varphi(x) := \sqrt{2} \, \cos(\pi x_1)
\end{equation}
is the first eigenfunction of the Dirichlet Laplacian
in $\sii((-\demi,\demi))$ normalised to~$1$,
which ensures that ${\partial_1^A}^2 u^0 = {\partial_2^A}^2 u^0$. 
Consequently,
$$
  \lambda_1^B(\Omega_1) \, \|\dot{u}_1\|^2
  - \|\partial_1^A \dot{u}_1\|^2 - \|\partial_2^A \dot{u}_1\|^2
  \to 0
$$
as $B \to 0$.
In summary, the right-hand side of~\eqref{derivatives2}
converges to~$4\pi^2$ as $B \to 0$.
It follows that there exists a positive constant~$C$
(possibly smaller than the previously chosen~$C$)
such that the right-hand side of~\eqref{derivatives2}
is positive for all $|B| \leq C$.
Again, the smallness of~$|B|$ and the positivity
must be independent of the choice of gauge.
\end{proof}

\section{Quantitative bounds}\label{Sec.quantitative}
To establish the global result of Theorem~\ref{Thm.small},
in this section we establish explicit upper and lower bounds 
to the eigenvalue $\lambda_1^B(\Omega_a)$.

Given $\beta \in \Real$, let $\nu(\beta)$ denote 
the lowest eigenvalue of the operator
$T_\beta := -\partial_x^2 + \beta^2 x^2$ in $\sii((-\demi,\demi))$,
subject to Dirichlet boundary conditions.
It is easy to see that 
\begin{equation}\label{nu.bounds} 
  \pi^2 \leq \nu(\beta) \leq \pi^2 + c \, \beta^2  
  \qquad \mbox{with} \qquad
  c := \int_{-\demi}^{\demi} x^2 \, \varphi(x)^2 \, \der x
  = \frac{1}{12} - \frac{1}{2\pi^2} \approx 0.03
  \,,
\end{equation}
where~$\varphi$ is given in~\eqref{ground}.
While these estimates are good for small~$|\beta|$,
we have $\nu(\beta) \sim |\beta|$ as $|\beta| \to \infty$. 
See Figure~\ref{Fig.harmonic} for the dependence
of~$\nu(\beta)$ on~$\beta$.
Let~$\varphi_\beta$ denote the positive eigenfunction of~$T_\beta$
corresponding to~$\nu(\beta)$ and normalised to~$1$ 
in $\sii((-\demi,\demi))$. 
Of course, $\varphi_0 = \varphi$.

\begin{figure}[h!]
\begin{center}
\includegraphics[width=0.7\textwidth]{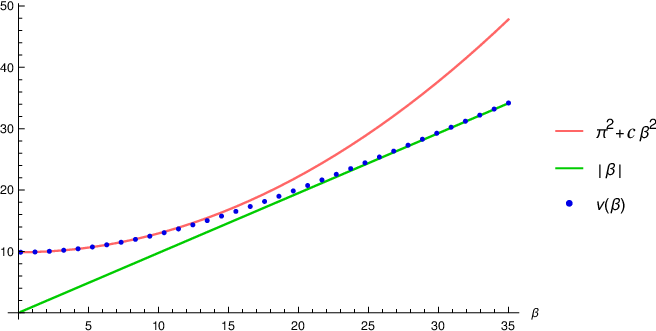} 
\end{center}
\caption{The eigenvalue~$\nu(\beta)$ as a function of~$\beta$
together with its asymptotics for small and large~$\beta$.}\label{Fig.harmonic}
\end{figure}

We start with an upper bound.
\begin{Proposition}\label{Prop.upper}
For every $a>0$ and $B \in \Real$,
\begin{equation}\label{upper} 
  \lambda_1^B(\Omega_a)
  \leq \pi^2 \, (a^{-2}+a^2)
  + c \, \frac{a^2}{1+a^4} \, B^2
  \,.
\end{equation}
\end{Proposition}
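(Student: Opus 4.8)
The plan is to insert a judicious trial function into the variational characterisation~\eqref{Rayleigh} and to exploit the gauge freedom encoded in the parameter~$\theta$ of~\eqref{gauge}. Since $\lambda_1^B(\Omega_a)$ is gauge invariant, for each fixed~$a$ one is free to select the value $\theta = a^4/(1+a^4)$, whence $1-\theta = 1/(1+a^4)$; this balances the magnetic field symmetrically between the two coordinate directions and will turn out to be optimal within the family of trial functions below. As trial function I take the real separable product $u(x) := \varphi_\beta(x_1)\,\varphi_\beta(x_2)$, where $\varphi_\beta$ is the normalised ground state of~$T_\beta$ and $\beta := a^2 B/(1+a^4)$ (only $\beta^2$ enters, so its sign is immaterial). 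Being a product of functions in $W_0^{1,2}((-\demi,\demi))$, the function~$u$ belongs to $W_0^{1,2}(\Omega_1)$ and satisfies $\|u\|=1$, so~\eqref{Rayleigh} gives the upper bound $\lambda_1^B(\Omega_a) \leq \hat{h}_a^A[u]$.

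The key computation is the evaluation of $\hat{h}_a^A[u]$. Because $\varphi_\beta$ is real-valued, in each of $\partial_1^A u$ and $\partial_2^A u$ the differentiated term is real while the magnetic term is purely imaginary, so the mixed contributions to $|\partial_1^A u|^2$ and $|\partial_2^A u|^2$ drop out. Writing $K := \|\varphi_\beta'\|^2$ and $P := \|x\,\varphi_\beta\|^2$ for the corresponding one-dimensional integrals over $(-\demi,\demi)$, one obtains $\|\partial_1^A u\|^2 = K + \theta^2 B^2 P$ and $\|\partial_2^A u\|^2 = K + (1-\theta)^2 B^2 P$. Regrouping $\hat{h}_a^A[u] = a^{-2}\|\partial_1^A u\|^2 + a^2\|\partial_2^A u\|^2$ by the two one-dimensional factors exhibits it as a sum of two harmonic-oscillator Rayleigh quotients with effective frequencies $a^2(1-\theta)|B|$ and $a^{-2}\theta|B|$; with the above choice of~$\theta$ these two frequencies coincide and equal~$|\beta|$, and since $\varphi_\beta$ is precisely the corresponding ground state one finds, using $\nu(\beta) = K + \beta^2 P$, the clean identity
$$
  \hat{h}_a^A[u] = (a^{-2}+a^2)\,\nu(\beta)
  \,.
$$

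It then remains to insert the elementary upper bound $\nu(\beta) \leq \pi^2 + c\,\beta^2$ from~\eqref{nu.bounds}. Using $(a^{-2}+a^2)\,\beta^2 = a^2 B^2/(1+a^4)$, this yields exactly~\eqref{upper}. The argument is entirely elementary once the right gauge and ansatz are in hand; the only genuine choice is the value of~$\theta$, which could equally be found a posteriori by first deriving the $\theta$-dependent bound $\lambda_1^B(\Omega_a) \leq a^{-2}\,\nu\big(a^2(1-\theta)B\big) + a^2\,\nu\big(a^{-2}\theta B\big)$ and then minimising the resulting quadratic in~$\theta$ after the estimate on~$\nu$ has been applied. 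I expect no real obstacle here; the one subtlety worth flagging is that the naive Landau gauge ($\theta=0$) would give the weaker constant $c\,a^2$ in place of $c\,a^2/(1+a^4)$, so distributing the field between both directions is essential to reach the stated bound.
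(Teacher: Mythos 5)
Your proposal is correct and follows essentially the same route as the paper: the separable trial function built from the ground states of the one-dimensional operators $T_\beta$, the gauge choice $\theta=a^4/(1+a^4)$ equalising the two effective frequencies to $\beta=a^2B/(1+a^4)$, the resulting identity $(a^{-2}+a^2)\,\nu(\beta)$, and the bound $\nu(\beta)\leq\pi^2+c\,\beta^2$. The only cosmetic difference is that you fix~$\theta$ at the outset, whereas the paper first records the $\theta$-dependent bound $a^{-2}\nu\big(a^2(1-\theta)B\big)+a^2\nu\big(a^{-2}\theta B\big)$ and then specialises.
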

\begin{proof}
Using $x \mapsto \varphi_{\beta_1}(x_1) \varphi_{\beta_2}(x_2)$
with $\beta_1 := a^2(1-\theta) B$
and $\beta_1 := a^{-2}\theta B$
as a trial function in~\eqref{Rayleigh},
we get
\begin{align}
  \lambda_1^B(\Omega_a) 
  &\leq a^{-2} \, \nu\big(a^2(1-\theta) B\big) 
  + a^2 \, \nu\big(a^{-2}\theta B\big)
  \\
  &= (a^{-2} + a^2) \, \nu\left(\frac{a^2}{1+a^4} \, B \right) 
  \label{upper.better}
\end{align}  
where equality follows by the special choice $\theta := a^4/(1+a^4)$
(this is the best choice if the trial function
$x \mapsto \varphi_{0}(x_1) \varphi_{0}(x_2)$
were directly used instead). 
The announced bound~\eqref{upper} follows by~\eqref{nu.bounds}. 
\end{proof}

As for the lower bound, we have a trivial result,
which follows at once by the diamagnetic inequality.  
\begin{Proposition}\label{Prop.lower}
For every $a>0$ and $B \in \Real$,
\begin{equation}\label{lower} 
  \lambda_1^B(\Omega_a)
  \geq \pi^2 \, (a^{-2}+a^2)
  \,.
\end{equation}
\end{Proposition}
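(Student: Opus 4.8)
The plan is to derive~\eqref{lower} directly from the variational characterisation~\eqref{Rayleigh} by lowering each magnetic gradient to an ordinary gradient and then invoking the one-dimensional Dirichlet Poincar\'e inequality in each coordinate direction. Concretely, I would fix an arbitrary $u \in W_0^{1,2}(\Omega_1)$ with $u \neq 0$ and estimate the numerator $\hat{h}_a^A[u] = a^{-2}\|\partial_1^A u\|^2 + a^2\|\partial_2^A u\|^2$ from below term by term, so that no interaction between the two coordinate directions has to be controlled.

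First I would apply the diamagnetic inequality $\|\partial_k^A u\| \geq \|\partial_k |u|\|$ for $k \in \{1,2\}$, exactly as already used in the proof of Lemma~\ref{Lem.minimiser}. This is legitimate because $u \in W_0^{1,2}(\Omega_1)$ implies $|u| \in W_0^{1,2}(\Omega_1)$. The purpose of passing to $|u|$ is that it removes the magnetic potential and reduces the task to a real, magnetic-free estimate.

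Next I would use that $\Omega_1 = (-\demi,\demi)^2$ is the unit square, so that for a function vanishing on $\partial\Omega_1$ the sharp Dirichlet Poincar\'e inequality in the $x_k$-direction reads $\|\partial_k |u|\|^2 \geq \pi^2 \, \|u\|^2$; here $\pi^2$ is the lowest Dirichlet eigenvalue of $-\partial_x^2$ on an interval of length one, that is, precisely the constant $\nu(0)$ from~\eqref{nu.bounds}. Combining the two steps yields $\hat{h}_a^A[u] \geq \pi^2 \, (a^{-2}+a^2) \, \|u\|^2$; dividing by $\|u\|^2$ and taking the infimum over $u$ in~\eqref{Rayleigh} then gives~\eqref{lower}.

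There is essentially no obstacle here: both ingredients are classical and the bound is established term-wise. The only point worth recording is that the estimate is generically \emph{not} sharp for $B \neq 0$, since the diamagnetic inequality is strict unless the magnetic field is absent; this is consistent with the strict gap between the lower bound~\eqref{lower} and the upper bound~\eqref{upper}, a gap that is exactly what the quantitative part of the argument must exploit.
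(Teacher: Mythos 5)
Your argument is correct and is precisely the one the paper intends: the paper dispenses with the proof by remarking that the bound ``follows at once by the diamagnetic inequality,'' the implicit second ingredient being exactly the directional Dirichlet Poincar\'e inequality $\|\partial_k^A u\| \geq \|\partial_k |u|\| \geq \pi \|u\|$ already spelled out in the proof of Lemma~\ref{Lem.minimiser}. Your write-up simply makes this explicit term by term in~\eqref{Rayleigh}, so it is the same approach.
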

\begin{Remark} 
The bounds~\eqref{upper} and~\eqref{lower} are enough 
for our purposes of weak magnetic fields. 
For strong fields, a better upper bound is given by~\eqref{upper.better}.
Indeed, the inequality becomes sharp as $|B| \to \infty$.
A better lower bound for strong fields is given by 
the uniform bound $\lambda_1^B(\Omega_a) \geq |B|$.
It follows at once by the domain monotonicity,
by estimating~$H_a^A$ from below by the operator 
which acts in the same way but on the entire plane
(the Landau Hamiltonian).
Another lower bound, using the domain monotonicity 
$\Omega_{a,a^{-1}} \subset (-\frac{a}{2},\frac{a}{2})\times\Real$ 
with $\theta=0$ 
or $\Omega_{a,a^{-1}} \subset \Real \times (-\frac{a}{2},\frac{a}{2})$ 
with $\theta=1$, reads
$$
  \lambda_1^B(\Omega_a) 
  \geq \max\left\{
  a^{-2} \nu(a^2 B),a^2 \nu(a^{-2} B) 
  \right\}
  \,.
$$
\end{Remark}

Now we are in a position to establish Theorem~\ref{Thm.small}.
\begin{proof}[Proof of Theorem~\ref{Thm.small}]
Using Propositions~\ref{Prop.upper} (with $a=1$)
and~\ref{Prop.lower} (with arbitrary $a>0$),
we have 
\begin{equation} 
  \lambda_1^B(\Omega_a)
  \geq \pi^2 \, (a^{-2}+a^2) 
  \stackrel{!}{\geq}
  2 \pi^2 + \frac{c}{2} \, B^2
  \geq \lambda_1^B(\Omega_1)
  \,,
\end{equation}
where the central inequality (with~$!$) holds provided that
$$
  a^{-2}+a^2 \geq 2 + \frac{c}{2\pi^2} \, B^2 
  \,.
$$
In particular, this is satisfied if 
$$
  |a-1| \geq \sqrt{\frac{c}{2\pi^2}} \, |B| 
  \,.
$$
In summary, Conjecture~\ref{Conj.main} holds 
provided that~$|a-1|$ is sufficiently large,
with the largeness diminishing when $B \to 0$. 
To complete this argument for small values of~$|a-1|$,
we use Theorem~\ref{Thm.local}.
\end{proof}
%

%\newpage
%\vfill  
%--------------%
% BIBLIOGRAPHY %
%--------------%
%
%\addcontentsline{toc}{section}{References}
\bibliography{bib}
\bibliographystyle{amsplain}

\end{document}